\theoremstyle{plain}
\newtheorem{theorem}{Theorem}[section]
\newtheorem{lemma}[theorem]{Lemma}
\newtheorem{proposition}[theorem]{Proposition}
\theoremstyle{definition}
\newtheorem{definition}[theorem]{Definition}
\newtheorem{remark}[theorem]{Remark}
\newcommand{\Ext}{\mbox{\rm Ext}}
\newcommand{\Hom}{\mbox{\rm Hom}}
\newcommand{\Ker}{\mbox{\rm Ker}}
\newcommand{\SGF}{\mbox{\rm SGF}}
\newcommand{\SG}{\mbox{\rm SG}}
\newcommand{\GP}{\mbox{\rm GP}}
\newcommand{\GF}{\mbox{\rm GF}}
\newcommand{\G}{\mbox{\rm G}}
\newcommand{\SSG}{\mbox{\rm SSG}}
\newcommand{\GFI}{\mbox{\rm GFI}}
\newcommand{\p}{\mbox{\rm P}}
\newcommand{\F}{\mbox{\rm F}}
\newcommand{\I}{\mbox{\rm I}}
\begin{document}
\title{Stability of strongly Gorenstein flat modules}
\author{ZHANPING WANG \ \ \ \ ZHONGKUI LIU}

\footnote[0]{*Supported by National Natural Science Foundation of China (Grant No. 11201377, 11261050) and Program of Science and Technique of Gansu Province (Grant No. 1208RJZA145).}\footnote[0]{Address
correspondence to Zhanping Wang, Department of Mathematics, Northwest Normal University, Lanzhou 730070, PR China.}\footnote[0]{E-mail: wangzp@nwnu.edu.cn (Z.P. Wang),
liuzk@nwnu.edu.cn (Z.K. Liu).}

\date{}\maketitle
\hspace{6.3cm}\noindent{\footnotesize {\bf Abstract}
\vspace{0.2cm}

\hspace{-0.75cm} A left $R$-module $M$ is called two-degree strongly Gorenstein flat if there exists
an exact sequence $\cdots \longrightarrow D_{1}\longrightarrow D_{0}\longrightarrow D_{-1}\longrightarrow D_{-2}\longrightarrow \cdots$ of strongly Gorenstein flat left $R$-modules such that $M\cong\ker (D_{0}\longrightarrow D_{-1})$  and $\Hom_{R} (-, F)$ leaves the sequence exact for any flat (or Gorenstein flat) left $R$-module $F$. In this paper, we show that the two-degree strongly Gorenstein flat modules are nothing more than the strongly Gorenstein flat modules.

\vspace{0.2cm}
\noindent{\footnotesize {2010 {\it{Mathematics Subject
Classification}}:}
16D40, 16D50, 16E05, 16E30

\noindent{\footnotesize {{\it{Keywords and phrases}}:}  strongly Gorenstein flat modules, Gorenstein FP-injective modules, two-degree strongly Gorenstein flat modules, two-degree Gorenstein FP-injective modules, Gorenstein projective modules.

\section{Introduction}
Throughout this paper, $R$ denotes an associative ring with unity, and all modules are assumed to be a left $R$-module. Denote by P($R$), I($R$) and F($R$) the class of
all projective, injective and flat left $R$-modules respectively.

The development of the Gorenstein homological algebra has reached an advanced
level since the pioneering works of Auslander and Bridger(\cite{AB1969}). One of the
key points of this theory is its ability to identify Gorenstein rings. In the Gorenstein
homological algebra one replaces projective, injective and flat modules, the elementary
entities on which the classical homological algebra is based, with the Gorenstein
projective, Gorenstein injective and Gorenstein flat modules. Recall from \cite{EJ1995} that a left $R$-module $M$ is called Gorenstein projective if there is an exact sequence
\[\cdots \longrightarrow P_{1}\longrightarrow P_{0}\longrightarrow P_{-1}\longrightarrow P_{-2}\longrightarrow \cdots\]
of projective left $R$-modules such that $M\cong\ker (P_{0}\longrightarrow P_{-1})$  and $\Hom_{R} (-, \p(R))$ leaves the sequence exact. Dually, The Gorenstein injective modules are defined. In \cite{EJT1993}, the Gorenstein flat modules are defined in terms of the tensor product.

Recently, Sather-Wagstaff et al. \cite{SSW2008} introduced modules that we call two-degree
Gorenstein projective modules: a module $M$ is two-degree Gorenstein projective if
there exists an exact sequence
\[\cdots \longrightarrow G_{1}\longrightarrow G_{0}\longrightarrow G_{-1}\longrightarrow G_{-2}\longrightarrow \cdots\] of the Gorenstein projective modules
with $M\cong\ker (G_{0}\longrightarrow G_{-1})$ such that the functor $\Hom_{R}(-, G)$ and $\Hom_{R}(G, -)$ leave the sequence exact for any Gorenstein projective module $G$. They proved that any two-degree
Gorenstein projective module is nothing but a Gorenstein projective module (Theorem A in \cite{SSW2008}). Later, similar notions were introduced and studied in \cite{BK2012, SSW2011, YL2012}.

As a special case of Gorenstein projective module, Ding, Li and Mao introduced and studied in \cite{Ding2009} strongly Gorenstein flat module, and several well-known classes of rings are characterized in terms of these modules. A left $R$-module $M$ is called strongly Gorenstein flat if there is an exact sequence
\[\cdots \longrightarrow P_{1}\longrightarrow P_{0}\longrightarrow P_{-1}\longrightarrow P_{-2}\longrightarrow \cdots\]
of projective left $R$-modules with $M\cong\Ker (P_{0}\longrightarrow P_{-1})$ such that $\Hom_{R} (-, \F(R))$ leaves the sequence exact. Dually, Mao and Ding introduced and studied in \cite{Mao2008} Gorenstein FP-injective modules, and showed that there is a very close relationship between Gorenstein FP-injective modules and Gorenstein flat modules.
Since over a Ding-Chen ring (that is, a left and right coherent ring with finite left and right self FP-injective dimension) the strongly Gorenstein flat modules and Gorenstein FP-injective modules have many nice properties analogous to Gorenstein projective modules and Gorenstein injective modules over a Gorenstein ring (that is, a left and right noetherian ring with finite left and right self injective dimension), Gillespie \cite{Gillespie2010} renamed these modules as Ding projective modules and Ding injective modules, respectively. At the same time, Gillespie introduced the Ding flat modules but it turns out that they are nothing more than the Gorenstein flat modules by \cite[Lemma 2.8]{Mao2008}.

The main purpose of this paper is to establish the stability of the strongly Gorenstein flat modules under the very process used to define these entities.

When we submit this article we do not know Xu's work \cite{Xu2013}. After the article was already submitted, it was pointed out to us that our results were obtained by Xu. But Xu's proof of the main theorem \cite[Theorem A]{Xu2013} is completely different from ours.

\section{Main results}
According to \cite{Ding2009}, a left $R$-module $M$ is called strongly Gorenstein flat if there is an exact sequence
\[\cdots \longrightarrow P_{1}\longrightarrow P_{0}\longrightarrow P_{-1}\longrightarrow P_{-2}\longrightarrow \cdots\]
of projective left $R$-modules such that $M\cong\Ker (P_{0}\longrightarrow P_{-1})$ and $\Hom_{R}(-, \F(R))$ leaves the sequence exact. We use $\SGF(R)$ to denote the class of all strongly Gorenstein flat modules.

Note that every projective module is strongly Gorenstein flat, and every strongly Gorenstein flat module is Gorenstein projective. For a left coherent ring $R$, it follows from \cite[Proposition 10.2.6]{Enochs2000} that a finitely presented module is strongly Gorenstein flat if and only if it is Gorenstein projective. Clearly, every Gorenstein projective module over a left perfect ring is strongly Gorenstein flat. Also it follows easily from \cite[Corollary 4.6]{Gillespie2010} that every Gorenstein projective module over a Gorenstein ring is strongly Gorenstein flat.

Recall that a class of modules is called projectively resolving (injective coresolving) if it is closed under extensions and kernels of surjections (cokernels of injections), and it contains all projective (injective) modules.

 The strongly Gorenstein flat modules have the following properties.

\begin{lemma}\label{lem1} The following assertions hold.

$\mathrm{(1)}$ If $M\in \SGF(R)$, then $\Ext^{i}_{R}(M,L)=0$ for all $i>0$ and all module $L$ of finite flat dimension.

$\mathrm{(2)}$ $\SGF(R)$ is a projectively resolving class, and closed under direct sums and direct summands.
\end{lemma}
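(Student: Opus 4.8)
The plan is to prove (1) first, since it feeds into (2). Write the defining complete resolution of $M$ as $\mathbf{P}\colon \cdots \to P_{1}\to P_{0}\to P_{-1}\to \cdots$ with $M=\Ker(P_{0}\to P_{-1})$, so that the left half $\cdots \to P_{2}\to P_{1}\to M\to 0$ is an ordinary projective resolution of $M$. When $F$ is flat the hypothesis says $\Hom_{R}(\mathbf{P},F)$ is exact, and reading off the cohomology of the left half shows this is exactly the statement that $\Ext^{i}_{R}(M,F)=0$ for all $i\geq 1$. To pass from a flat $F$ to a module $L$ of finite flat dimension $n$, I would induct on $n$: picking a short exact sequence $0\to K\to F\to L\to 0$ with $F$ flat and the flat dimension of $K$ equal to $n-1$, the long exact sequence $\Ext^{i}_{R}(M,F)\to \Ext^{i}_{R}(M,L)\to \Ext^{i+1}_{R}(M,K)$ squeezes $\Ext^{i}_{R}(M,L)$ between two vanishing terms for every $i\geq 1$.

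For (2) I would first record the shift-invariance that underlies everything: a complete resolution may be re-based at any of its kernel terms, so every syzygy and every cosyzygy of a Ding projective module is again Ding projective. The containment of all projectives and closure under direct sums are then routine, a direct sum of complete resolutions being again exact with projective terms, while $\Hom_{R}(-,F)$ carries the direct sum to the direct product of the exact complexes $\Hom_{R}(\mathbf{P}^{\alpha},F)$, which stays exact because products are exact in $\mathbf{Ab}$. The summand property I would defer to an Eilenberg swindle at the very end, once the projectively resolving properties are in hand.

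The heart of the matter is closure under extensions. Given $0\to A\to B\to C\to 0$ with $A,C\in\DP(R)$, I would build a complete resolution of $B$ in two halves. For the coresolution half $0\to B\to P_{B}^{0}\to P_{B}^{-1}\to\cdots$ I would run the dual Horseshoe construction on the coresolutions of $A$ and $C$, taking $P_{B}^{i}=P_{A}^{i}\oplus P_{C}^{i}$; the lifting it needs is available since $\Ext^{1}_{R}(C,P_{A}^{i})=0$ (each $P_{A}^{i}$ is projective, hence flat, so part (1) applies to $C$), and the successive cosyzygies stay Ding projective by shift-invariance, so the induction continues. This yields a degreewise split short exact sequence of complexes $0\to P_{A}^{\bullet}\to P_{B}^{\bullet}\to P_{C}^{\bullet}\to 0$ whose outer complexes are $\Hom_{R}(-,F)$-exact, and feeding the degreewise-split sequence through $\Hom_{R}(-,F)$ forces the middle to be exact as well. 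Splicing this coresolution with a projective resolution of $B$, the only delicate point is $\Hom_{R}(-,F)$-exactness at the junction, which I would verify directly from the surjectivity of $\Hom_{R}(P_{B}^{0},F)\to \Hom_{R}(B,F)$ together with $\Ext^{\geq 1}_{R}(B,F)=0$ (this last from the $\Ext$ sequence of $0\to A\to B\to C\to 0$). Keeping the $\Hom_{R}(-,F)$-exactness intact across the splice is the step I expect to be the main obstacle.

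Closure under kernels of surjections and under summands then follows formally. For kernels, given $0\to A\to B\to C\to 0$ with $B,C\in\DP(R)$, I would embed $A\hookrightarrow B\hookrightarrow P_{B}^{0}$; the cokernel $A^{1}=P_{B}^{0}/A$ surjects onto the cosyzygy $B^{1}=P_{B}^{0}/B$ with kernel $C$, giving $0\to C\to A^{1}\to B^{1}\to 0$, so extension-closure yields $A^{1}\in\DP(R)$. Iterating produces a $\Hom_{R}(-,F)$-exact projective coresolution of $A$, which together with $\Ext^{\geq 1}_{R}(A,F)=0$ splices to a complete resolution exactly as above. Finally, for a summand $M=A\oplus B\in\DP(R)$, an Eilenberg swindle gives $A\oplus W\cong W$ for $W=M^{(\mathbb{N})}\in\DP(R)$, so $A$ is the kernel of the projection $A\oplus W\to W$ between Ding projective modules and hence lies in $\DP(R)$.
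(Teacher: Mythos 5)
Your proposal is correct and follows essentially the same route as the paper: the paper disposes of (1) as ``trivial'' (your dimension-shifting induction is the standard expansion of this) and proves (2) by citing the proof of Theorem 2.5 in Holm's \emph{Gorenstein homological dimensions}, which is exactly the argument you reconstruct---the dual Horseshoe construction with $\Ext^{1}$-vanishing for extension closure, splicing with junction exactness checks, and the Eilenberg swindle reducing direct summands to kernels of surjections. In effect you have written out in full the proof that the paper delegates to the reference.
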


\begin{proof} (1) It is trivial.

(2) It follows by analogy with the proof of Theorem 2.5 in \cite{Holm2004}.
\end{proof}

\begin{definition}\label{def1} A module $M$ is called two-degree strongly Gorenstein flat if there exists an  exact sequence \[\cdots \longrightarrow D_{1}\longrightarrow D_{0}\longrightarrow D_{-1}\longrightarrow D_{-2}\longrightarrow \cdots\] of strongly Gorenstein flat modules such that $M\cong \Ker(D_{0}\rightarrow D_{-1})$ and $\Hom_{R}(-, \F(R))$ leaves the sequence exact.
\end{definition}
We use $\SG^{2}\F(R)$ to denote the class of all two-degree strongly Gorenstein flat modules. Clearly, $\SGF(R)\subseteq\SG^{2}\F(R)$.

\begin{proposition} \label{prop2}If $M\in \SG^{2}\F(R)$, then $\Ext_{R}^{i}(M, L)=0$ for each module $L$ with finite flat dimension and each integer $i\geq 1$.
\end{proposition}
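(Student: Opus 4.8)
The engine of the whole argument is Lemma \ref{lem1}(1): every Ding projective module $D$ satisfies $\Ext^{i}_{R}(D,L)=0$ for all $i\geq 1$ whenever $L$ has finite flat dimension. Starting from the defining exact sequence $\cdots\to D_{1}\to D_{0}\to D_{-1}\to\cdots$ with $M=\Ker(D_{0}\to D_{-1})$, I would first introduce its syzygies $Z_{j}=\im(D_{j+1}\to D_{j})=\Ker(D_{j}\to D_{j-1})$, so that $Z_{0}=M$ and there are short exact sequences $0\to Z_{j}\to D_{j}\to Z_{j-1}\to 0$ for $j\geq 1$. Since each $D_{j}$ is Ding projective, Lemma \ref{lem1}(1) annihilates $\Ext^{i}_{R}(D_{j},L)$ for $i\geq 1$, so the long exact sequences attached to these extensions yield the dimension-shifting isomorphisms $\Ext^{i}_{R}(Z_{j-1},L)\cong\Ext^{i-1}_{R}(Z_{j},L)$ for $i\geq 2$; iterating gives $\Ext^{i}_{R}(M,L)\cong\Ext^{1}_{R}(Z_{i-1},L)$ for every $i\geq 1$. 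The key structural remark is that, after reindexing the defining sequence so that $D_{j}$ occupies the distinguished spot, each $Z_{j}$ is again two-degree Ding projective (the complex and its $\Hom_{R}(-,\F(R))$-exactness are unchanged). Hence the entire proposition collapses to one assertion: $\Ext^{1}_{R}(N,L)=0$ for every $N\in\D^{2}\p(R)$ and every $L$ of finite flat dimension.

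I would prove this $\Ext^{1}$-vanishing by induction on the flat dimension of $L$, the base case $L$ flat being the heart of the matter. Write $N=\Ker(D_{0}\to D_{-1})$ and $d_{j}\colon D_{j}\to D_{j-1}$ for the differentials, so $d_{j}d_{j+1}=0$. From the short exact sequence $0\to Z_{1}\to D_{1}\to N\to 0$ and $\Ext^{1}_{R}(D_{1},L)=0$, the long exact sequence identifies $\Ext^{1}_{R}(N,L)$ with the cokernel of the restriction map $\Hom_{R}(D_{1},L)\to\Hom_{R}(Z_{1},L)$, so it suffices to extend an arbitrary $\phi\colon Z_{1}\to L$ to $D_{1}$. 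This is where the defining hypothesis finally enters: composing $\phi$ with the surjection $\pi\colon D_{2}\twoheadrightarrow Z_{1}$ (the corestriction of $d_{2}$) gives $\phi\pi\in\Hom_{R}(D_{2},L)$, and since $d_{2}d_{3}=0$ with $Z_{1}\hookrightarrow D_{1}$ monic we get $\pi d_{3}=0$, so $\phi\pi$ is killed by the map $\Hom_{R}(D_{2},L)\to\Hom_{R}(D_{3},L)$. Because $\Hom_{R}(-,L)$ leaves the complex exact (as $L$ is flat), $\phi\pi$ then lies in the image of $\Hom_{R}(D_{1},L)\to\Hom_{R}(D_{2},L)$, and any preimage $\psi\colon D_{1}\to L$ satisfies $\psi|_{Z_{1}}=\phi$. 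This surjectivity is the main obstacle, precisely because the defining complex is built from Ding projective rather than projective modules, so $\Ext$ cannot be read off directly and the exactness of $\Hom_{R}(-,L)$ must be exploited by hand.

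For the inductive step, suppose the claim holds for coefficient modules of flat dimension $<n$ and let $L$ have flat dimension $n$. Choosing a short exact sequence $0\to K\to F\to L\to 0$ with $F$ flat and $K$ of flat dimension $n-1$, the long exact sequence produces $\Ext^{1}_{R}(N,F)\to\Ext^{1}_{R}(N,L)\to\Ext^{2}_{R}(N,K)$; the left term vanishes by the base case, while $\Ext^{2}_{R}(N,K)\cong\Ext^{1}_{R}(Z_{1},K)$ by the dimension shift of the first paragraph, and this vanishes by the induction hypothesis since $Z_{1}\in\D^{2}\p(R)$. Hence $\Ext^{1}_{R}(N,L)=0$, which closes the induction. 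Feeding this back into the isomorphism $\Ext^{i}_{R}(M,L)\cong\Ext^{1}_{R}(Z_{i-1},L)$, together with $Z_{i-1}\in\D^{2}\p(R)$, yields $\Ext^{i}_{R}(M,L)=0$ for all $i\geq 1$, as required.
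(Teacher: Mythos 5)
Your proof is correct and follows essentially the same route as the paper's: induction on the flat dimension of $L$, with the base case coming from the $\Hom_{R}(-,\F(R))$-exactness of the defining complex and the higher $\Ext$ groups handled by dimension shifting along the syzygies $Z_{j}$, each of which is again two-degree Ding projective (the paper's $K$ plays the role of your $Z_{1}$). The only difference is expository: you spell out the lifting argument showing that $\Hom_{R}(D_{1},L)\to\Hom_{R}(Z_{1},L)$ is surjective, a step the paper compresses into the assertion that the short exact sequence $0\to K\to D\to M\to 0$ is $\Hom_{R}(-,F)$-exact and hence $\Ext_{R}^{1}(M,F)=0$.
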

\begin{proof} We proceed by induction on $n :=fd_{R}(L)<\infty$. Suppose $M$ is a two-degree strongly Gorenstein flat module. Then there exists a short exact sequence $0\rightarrow K\rightarrow D\rightarrow M\rightarrow 0$ such
that $D\in \SGF(R)$, $K\in \SG^{2}\F(R)$ and $\Hom_{R}(-, F)$ leaves the sequence exact for each flat module $F$. Thus $\Ext_{R}^{1}(M, F)=0$ for each flat module $F$. Applying the functor $\Hom_{R}(-, F)$ to the above sequence, we get the following exact sequence
$$0=\Ext^{1}_{R}(D, F)\rightarrow \Ext^{1}_{R}(K, F)\rightarrow \Ext^{2}_{R}(M, F)\rightarrow \Ext_{R}^{2}(D, F)=0,$$ which yield $\Ext_{R}^{1}(K, F)\cong \Ext_{R}^{2}(M, F)$. By the above proof for $M$, we have $\Ext_{R}^{1}(K, F)=0$, and so $\Ext_{R}^{2}(M, F)=0$. Reiterating this process, we get $\Ext_{R}^{i}(M, F)=0$. Then the case $n=0$ holds. Now suppose $n\geq1$ and $L$ is a module of flat dimension $n$. Let $0\rightarrow L^{'}\rightarrow F\rightarrow L\rightarrow 0$ be an exact sequence such that $F$ is flat. Applying the functor $\Hom_{R}(M, -)$ to it, we get the following exact sequence
$$0=\Ext_{R}^{i}(M, F)\rightarrow \Ext_{R}^{i}(M, L)\rightarrow \Ext_{R}^{i+1}(M, L^{'})\rightarrow \Ext_{R}^{i+1}(M, F)=0.$$
 By inductive assumptions, $\Ext_{R}^{i}(M, L)\cong\Ext_{R}^{i+1}(M, L^{'})=0$
for each integer $i\geq1$, as desired.
\end{proof}

Recall from Definition 2.1 in \cite{BM2007} that a module $M$ is called strongly Gorenstein projective if there exists an exact sequence \[\cdots \stackrel{f}\longrightarrow P\stackrel{f}\longrightarrow P\stackrel{f}\longrightarrow P\stackrel{f}\longrightarrow P\stackrel{f}\longrightarrow \cdots\] of projective modules such that $M\cong \Ker(f)$ and $\Hom_{R}(-, \p(R))$ leaves the sequence exact. It is proved that
each Gorenstein projective module is a direct summand of a strongly Gorenstein projective module (Theorem 2.7 in \cite{BM2007}). Inspired by it, we introduce the notion of strongly two-degree strongly Gorenstein flat modules. This notion plays a crucial role in the proof of the main theorem (see Theorem \ref{th}).

\begin{definition}\label{def2} A module $M$ is called strongly two-degree strongly Gorenstein flat if there exists an exact sequence \[\cdots \stackrel{f}\longrightarrow D\stackrel{f}\longrightarrow D\stackrel{f}\longrightarrow D\stackrel{f}\longrightarrow D\stackrel{f}\longrightarrow \cdots\] of strongly Gorenstein flat modules such that $M\cong \Ker(f)$ and $\Hom_{R}(-, \F(R))$ leaves the sequence exact.
\end{definition}
We use $\SSG^{2}\F(R)$ to denote the class of all strongly two-degree strongly Gorenstein flat modules. Clearly, $\SSG^{2}\F(R)\subseteq\SG^{2}\F(R)$.
\begin{proposition}\label{prop3} For any module $M$, the following statements are equivalent.

$\mathrm{(1)}$ $M\in \SSG^{2}\F(R)$.

$\mathrm{(2)}$ There is a short exact sequence $0\rightarrow M\rightarrow D\rightarrow M \rightarrow 0$ such that $D\in \SGF(R)$ and $\Ext_{R}^{1}(M, F)=0$ for each flat module $F$.

$\mathrm{(3)}$ There is a short exact sequence $0\rightarrow M\rightarrow D\rightarrow M \rightarrow 0$ such that $D\in \SGF(R)$ and $\Ext_{R}^{1}(M, L)=0$ for each module $L$ with finite flat dimension.

$\mathrm{(4)}$ There is a short exact sequence $0\rightarrow M\rightarrow D\rightarrow M \rightarrow 0$ such that $D\in \SGF(R)$ and $\Hom_{R}(-, \F(R))$ leaves the sequence exact.

$\mathrm{(5)}$ There is a short exact sequence $0\rightarrow M\rightarrow D\rightarrow M \rightarrow 0$ such that $D\in \SGF(R)$ and $\Hom_{R}(-, L)$ leaves the sequence exact for each module $L$ with finite flat dimension.

\end{proposition}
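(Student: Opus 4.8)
The plan is to prove the cycle of implications $(1)\Rightarrow(3)\Rightarrow(5)\Rightarrow(4)\Rightarrow(2)\Rightarrow(1)$, which passes through all five statements while isolating the single constructive step. For $(1)\Rightarrow(3)$ I would begin with the defining periodic sequence $\cdots\xrightarrow{f}D\xrightarrow{f}D\xrightarrow{f}\cdots$ with $M\cong\Ker f$; its exactness gives $\im f=\Ker f\cong M$, so it decomposes into copies of one short exact sequence $0\to M\to D\to M\to 0$ with $D\in\DP(R)$. Since $\SD^{2}\p(R)\subseteq\D^{2}\p(R)$, Proposition \ref{prop2} applies to $M$ and yields $\Ext^{1}_{R}(M,L)=0$ for every module $L$ of finite flat dimension, which is $(3)$.

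I expect $(3)\Rightarrow(5)$ and $(5)\Rightarrow(4)$ to be purely formal. Applying $\Hom_{R}(-,L)$ to $0\to M\to D\to M\to 0$ yields the exact sequence
\[0\to\Hom_{R}(M,L)\to\Hom_{R}(D,L)\to\Hom_{R}(M,L)\to\Ext^{1}_{R}(M,L),\]
so the vanishing in $(3)$ makes the middle map surjective, i.e. $\Hom_{R}(-,L)$ leaves the short sequence exact, giving $(5)$; and $(5)\Rightarrow(4)$ is immediate since flat modules have finite flat dimension.

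For $(4)\Rightarrow(2)$ the essential input is the Ding projectivity of $D$. Writing the sequence as $0\to M\xrightarrow{\alpha}D\xrightarrow{\beta}M\to 0$ and applying $\Hom_{R}(-,F)$ for flat $F$, the hypothesis of $(4)$ makes $\alpha^{\ast}\colon\Hom_{R}(D,F)\to\Hom_{R}(M,F)$ surjective; in the long exact $\Ext$-sequence the connecting map then vanishes, so $\Ext^{1}_{R}(M,F)$ embeds into $\Ext^{1}_{R}(D,F)=0$, whence $\Ext^{1}_{R}(M,F)=0$ and $(2)$ holds.

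The crux is $(2)\Rightarrow(1)$, where one must manufacture the complex $\cdots\xrightarrow{f}D\xrightarrow{f}D\xrightarrow{f}\cdots$ built on a single module and a single endomorphism. Given $0\to M\xrightarrow{\alpha}D\xrightarrow{\beta}M\to 0$ with $D\in\DP(R)$ and $\Ext^{1}_{R}(M,F)=0$ for all flat $F$, I would first observe, by the same long-exact-sequence argument run in reverse, that $\Hom_{R}(-,F)$ keeps the short sequence exact. Then I set $f:=\alpha\beta\colon D\to D$ and splice countably many copies of the short sequence; since $\alpha$ is monic and $\beta$ is epic, $\Ker f=\Ker\beta=\im\alpha=\im f\cong M$, so the spliced complex is exact with $\Ker f\cong M$. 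The last point, which I regard as the main obstacle, is to check that $\Hom_{R}(-,F)$ leaves this complex exact: since $f^{\ast}=\beta^{\ast}\alpha^{\ast}$ and $0\to\Hom_{R}(M,F)\xrightarrow{\beta^{\ast}}\Hom_{R}(D,F)\xrightarrow{\alpha^{\ast}}\Hom_{R}(M,F)\to 0$ is exact, one computes $\Ker f^{\ast}=\Ker\alpha^{\ast}=\im\beta^{\ast}=\im f^{\ast}$. This gives $M\in\SD^{2}\p(R)$ and closes the cycle.
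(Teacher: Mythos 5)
Your proof is correct, and it is precisely the ``standard argument'' that the paper invokes without writing out: the paper's proof of Proposition~\ref{prop3} is a one-line appeal to the definition, and your cycle $(1)\Rightarrow(3)\Rightarrow(5)\Rightarrow(4)\Rightarrow(2)\Rightarrow(1)$ supplies exactly those standard details. In particular, your key step $(2)\Rightarrow(1)$, splicing the short exact sequence via $f=\alpha\beta$ and checking $\Ker f^{\ast}=\im\beta^{\ast}=\im f^{\ast}$, is the classical Bennis--Mahdou-type argument for ``strongly'' versions of Gorenstein-like classes, so there is no gap and no genuine divergence from the intended proof.
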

\begin{proof}Using standard argument, it follows immediately from the definition of strongly two-degree strongly Gorenstein flat modules.
\end{proof}

\begin{proposition} \label{prop4} Let $M$ be a two-degree strongly Gorenstein flat module. Then $M$ is a direct summand of a strongly two-degree strongly Gorenstein flat module.
\end{proposition}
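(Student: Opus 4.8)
The plan is to adapt the Bennis--Mahdou construction (Theorem 2.7 of \cite{BM2007}) to the Ding setting: the doubly infinite resolution defining $M$ will be folded into a single periodic endomorphism, and Proposition \ref{prop3} will identify the outcome as a strongly two-degree Ding projective module containing $M$ as a direct summand.

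First I would fix an exact sequence $\cdots \to D_{1}\xrightarrow{\partial_{1}} D_{0}\xrightarrow{\partial_{0}} D_{-1}\to\cdots$ of Ding projective modules witnessing $M\in\D^{2}\p(R)$, with $M\cong\Ker\partial_{0}$ and $\Hom_{R}(-,\F(R))$ leaving it exact. Setting $C_{i}=\Ker\partial_{i}$ (so $C_{0}=M$) produces short exact sequences $0\to C_{i}\to D_{i}\to C_{i-1}\to 0$ for every $i\in\mathbb{Z}$. Since the same sequence, reindexed so that $C_{i}$ occupies the degree-zero kernel, again witnesses $C_{i}$ as a two-degree Ding projective module, Proposition \ref{prop2} gives $\Ext^{1}_{R}(C_{i},F)=0$ for every flat module $F$ and every $i$.

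Next I would set $N=\bigoplus_{i\in\mathbb{Z}}C_{i}$ and $\mathcal{D}=\bigoplus_{i\in\mathbb{Z}}D_{i}$ and take the direct sum over $i$ of the short exact sequences above. Reindexing the cokernel term via $\bigoplus_{i}C_{i-1}\cong\bigoplus_{i}C_{i}=N$ yields a single short exact sequence $0\to N\to\mathcal{D}\to N\to 0$. Here $\mathcal{D}$ is Ding projective because $\DP(R)$ is closed under direct sums by Lemma \ref{lem1}(2), while $\Ext^{1}_{R}(N,F)\cong\prod_{i}\Ext^{1}_{R}(C_{i},F)=0$ for every flat $F$ by the previous step. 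Condition (2) of Proposition \ref{prop3} therefore shows $N\in\SD^{2}\p(R)$, and since $M=C_{0}$ is plainly a direct summand of $N$, the proposition follows.

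The construction is clean, so the points needing genuine care are mostly bookkeeping: confirming that the direct sum of the syzygy sequences really folds, after reindexing, into one periodic sequence in which $N$ appears simultaneously as the kernel and the cokernel of a single endomorphism of $\mathcal{D}$, and invoking the closure of $\DP(R)$ under the infinite direct sum defining $\mathcal{D}$ (Lemma \ref{lem1}(2)). The one genuinely conceptual point---and the reason the ``strongly'' notion is needed at all---is that $M$ itself has no reason to carry a periodic resolution; passing to the direct sum $N$ of all the syzygies is precisely what manufactures the required periodicity, at the cost of recovering $M$ only up to a direct summand.
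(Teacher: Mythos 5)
Your proposal is correct and takes essentially the same route as the paper: both fold the doubly infinite resolution by forming the direct sum of all its shifts (equivalently, the direct sum of all the syzygy sequences $0\rightarrow C_{i}\rightarrow D_{i}\rightarrow C_{i-1}\rightarrow 0$), invoke Lemma \ref{lem1}(2) to see that $\bigoplus_{i} D_{i}$ is Ding projective, and exhibit $M$ as a direct summand of $\bigoplus_{i}\Ker$. The only cosmetic difference is the final verification: the paper checks $\Hom_{R}(-,\F(R))$-exactness of the periodic complex directly, using $\Hom_{R}(\bigoplus_{i} D_{i}, F)\cong \prod_{i}\Hom_{R}(D_{i}, F)$, whereas you route through the characterization in Proposition \ref{prop3}(2) together with the $\Ext$-vanishing of Proposition \ref{prop2}.
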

\begin{proof} Let $M$ be a two-degree strongly Gorenstein flat module. Then there exists an exact sequence \[\cdots \stackrel{\delta_{2}}\longrightarrow D_{1}\stackrel{\delta_{1}}\longrightarrow D_{0}\stackrel{\delta_{0}}\longrightarrow D_{-1}\stackrel{\delta_{-1}}\longrightarrow D_{-2}\stackrel{\delta_{-2}}\longrightarrow \cdots\] of strongly Gorenstein flat modules such that $M\cong \Ker(\delta_{0})$ and $\Hom_{R}(-, \F(R))$ leaves the sequence exact. Consider the exact sequence \[\cdots \stackrel{\bigoplus\delta_{i}}\longrightarrow \bigoplus D_{i}\stackrel{\bigoplus\delta_{i}}\longrightarrow \bigoplus D_{i}\stackrel{\bigoplus\delta_{i}}\longrightarrow \bigoplus D_{i}\stackrel{\bigoplus\delta_{i}}\longrightarrow \bigoplus D_{i} \stackrel{\bigoplus\delta_{i}}\longrightarrow \cdots.\]
Since $\Ker(\bigoplus\delta_{i})\cong \bigoplus\Ker(\delta_{i})$, $M$ is a direct summand of $\Ker(\bigoplus\delta_{i})$. By Lemma \ref{lem1} and $\Hom_{R}(\bigoplus_{i}D_{i}, F)\cong \prod_{i}\Hom_{R}(D_{i}, F)$ for each flat module $F$, we get $\Ker(\bigoplus\delta_{i})$ is a strongly two-degree strongly Gorenstein flat module.
\end{proof}
\begin{theorem} \label{th}$\SGF(R)=\SG^{2}\F(R)$.
\end{theorem}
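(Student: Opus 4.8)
The plan is to prove the only nontrivial inclusion $\D^{2}\p(R)\subseteq\DP(R)$, the reverse having already been observed. First I would reduce to the strongly two-degree case. Indeed, if $\SD^{2}\p(R)\subseteq\DP(R)$ is known, then the theorem follows at once: given $M\in\D^{2}\p(R)$, Proposition~\ref{prop4} exhibits $M$ as a direct summand of some $N\in\SD^{2}\p(R)$, and since $N\in\DP(R)$ and $\DP(R)$ is closed under direct summands by Lemma~\ref{lem1}(2), we get $M\in\DP(R)$. So the whole problem is to show that a strongly two-degree Ding projective module is Ding projective.

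Let $M\in\SD^{2}\p(R)$. By Proposition~\ref{prop3} there is a short exact sequence $0\to M\to D\to M\to 0$ with $D\in\DP(R)$, and since $\SD^{2}\p(R)\subseteq\D^{2}\p(R)$, Proposition~\ref{prop2} gives $\Ext^{i}_{R}(M,F)=0$ for all $i\ge 1$ and all flat $F$. As $D$ is Ding projective, I would fix its complete projective resolution and single out the right half $0\to D\to P^{0}\to P^{1}\to\cdots$ with each $P^{j}$ projective; writing $E_{0}=D$ and $0\to E_{j}\to P^{j}\to E_{j+1}\to 0$, each cosyzygy $E_{j}$ is again Ding projective (re-truncate the complete resolution), so $\Ext^{i}_{R}(E_{j},F)=0$ for $i\ge 1$.

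The heart of the argument is to manufacture a $\Hom_{R}(-,\F(R))$-exact projective coresolution $0\to M\to Q^{0}\to Q^{1}\to\cdots$ of $M$, which I would build by an iterated horseshoe construction. Composing $M\hookrightarrow D\hookrightarrow P^{0}$ gives $0\to M\to P^{0}\to W_{1}\to 0$ with $W_{1}=P^{0}/M$; the filtration $M\subseteq D\subseteq P^{0}$ together with $D/M\cong M$ yields $0\to M\to W_{1}\to E_{1}\to 0$. Inductively, having embedded $W_{n-1}$ in a projective with cokernel $W_{n}$ fitting into $0\to W_{n-1}\to W_{n}\to E_{n}\to 0$, I would apply the horseshoe lemma to this sequence using that embedding of $W_{n-1}$ together with $E_{n}\hookrightarrow P^{n}$; this embeds $W_{n}$ into a projective $Q^{n}$ (a finite direct sum of the $P^{j}$) with $0\to W_{n}\to Q^{n}\to W_{n+1}\to 0$ and $0\to W_{n}\to W_{n+1}\to E_{n+1}\to 0$, and splicing the maps $Q^{n}\twoheadrightarrow W_{n+1}\hookrightarrow Q^{n+1}$ produces the coresolution with $M\cong\Ker(Q^{0}\to Q^{1})$. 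Each $W_{n}$ then carries a finite filtration whose factors lie in $\{M,E_{1},\dots,E_{n}\}$, so the long exact sequence of $\Ext$ gives $\Ext^{i}_{R}(W_{n},F)=0$ for all $i\ge 1$ and all flat $F$; consequently every sequence $0\to W_{n}\to Q^{n}\to W_{n+1}\to 0$ stays exact under $\Hom_{R}(-,F)$, and hence so does the whole coresolution. This bookkeeping — keeping the cosyzygies $W_{n}$ under control and verifying the $\Hom_{R}(-,\F(R))$-exactness — is exactly where I expect the main difficulty to sit.

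Finally I would splice this right half with any projective resolution $\cdots\to P_{1}\to P_{0}\to M\to 0$, which is $\Hom_{R}(-,\F(R))$-exact because $\Ext^{i}_{R}(M,F)=0$ for $i\ge 1$. A routine check at the junction $P_{0}\to Q^{0}$ shows the spliced exact complex of projectives remains exact after applying $\Hom_{R}(-,F)$ for every flat $F$, so it is a complete projective resolution with $M\cong\Ker(Q^{0}\to Q^{1})$; thus $M\in\DP(R)$. This gives $\SD^{2}\p(R)\subseteq\DP(R)$, and combined with the reduction of the first paragraph and the trivial inclusion $\DP(R)\subseteq\D^{2}\p(R)$, we conclude $\DP(R)=\D^{2}\p(R)$.
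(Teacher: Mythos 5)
Your proposal is correct, and it shares the paper's skeleton: reduce to the strongly two-degree case via Proposition \ref{prop4} and the direct-summand closure in Lemma \ref{lem1}(2), then manufacture a $\Hom_{R}(-,\F(R))$-exact projective coresolution using Proposition \ref{prop2} and Proposition \ref{prop3}. But the construction itself takes a different route. The paper never coresolves $M$ directly: from $0\to M\to D\to M\to 0$ and $0\to D\to P\to D_{1}\to 0$ it forms a pushout producing $N=P/M$ (your $W_{1}$) with $0\to M\to N\to D_{1}\to 0$ and $\Ext_{R}^{i}(N,F)=0$, then iterates two pushouts per step --- one creating a Ding projective extension $D_{2}$ of $D_{1}$ by $D$ (closure under extensions), the next, along an embedding $D_{2}\hookrightarrow P_{0}$, producing the following cosyzygy $G$ with the same properties as $N$ --- so that $N$ acquires a $\Hom_{R}(-,\F(R))$-exact projective coresolution and is Ding projective; finally $M$ is Ding projective because it is the kernel of $N\twoheadrightarrow D_{1}$ and $\DP(R)$ is projectively resolving. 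You instead coresolve $M$ itself, with projectives the accumulated sums $P^{0}\oplus\cdots\oplus P^{n}$ from the fixed coresolution of $D$, the embeddings coming from a horseshoe-type construction. One point you must make explicit: a horseshoe lemma for coresolutions by \emph{projectives} (rather than injectives) is not free --- at stage $n$ you need to extend $W_{n-1}\to Q^{n-1}$ along $W_{n-1}\hookrightarrow W_{n}$, and the obstruction lies in $\Ext_{R}^{1}(E_{n},Q^{n-1})$, which vanishes precisely because $E_{n}$ is Ding projective and $Q^{n-1}$ is flat; you record this vanishing early on but never invoke it at the step where it is actually needed. Granting that, your filtration argument (factors $M,E_{1},\dots,E_{n}$, hence $\Ext_{R}^{i}(W_{n},F)=0$ for $i\geq 1$) and the final splice are sound. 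As for what each approach buys: the paper's pushouts deliver each new cosyzygy and its $\Ext$-vanishing straight from a diagram, with no lifting argument, but they use the full resolving strength of Lemma \ref{lem1}(2); your route needs only the summand closure from that lemma and ends with an explicit complete resolution of $M$, at the cost of the extension-obstruction bookkeeping.
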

\begin{proof} Clearly, $\SGF(R)\subseteq\SG^{2}\F(R)$. It suffices to prove that $\SG^{2}\F(R)\subseteq \SGF(R)$. Since $\SGF(R)$ is closed under direct summands, it suffices to prove that any strongly two-degree strongly Gorenstein flat module is strongly Gorenstein flat by Proposition \ref{prop4}. Suppose $M$ is a strongly two-degree strongly Gorenstein flat module. Then there exists a short exact sequence $0\rightarrow M\rightarrow D\rightarrow M \rightarrow 0$ such that $D\in \SGF(R)$ and $\Ext_{R}^{1}(M, F)=0$ for each flat module $F$ by Proposition \ref{prop3}. As $D$ is strongly Gorenstein flat, there is a short exact sequence $0\rightarrow D\rightarrow P\rightarrow D_{1} \rightarrow 0$ such that $P\in \p(R)$ and $D_{1}\in \SGF(R)$. Then we get the following pushout diagram:
 \begin{center}
$\xymatrix{
      & & 0\ar[d]_{}  & 0 \ar[d]_{} &  \\
       0\ar[r]& M\ar@{=}[d]^{} \ar[r]& D\ar[d]\ar[r] &M\ar[d]\ar[r]&0 \\
 0\ar[r]& M \ar[r]& P\ar[d]\ar[r] &N\ar[d]\ar[r]&0 \\
&&D_{1}\ar[d]_{} \ar@{=}[r]^{} & D_{1} \ar[d]_{} \\
    & & 0& 0  &
      }$
\end{center}
For any flat module $F$, applying the functor $\Hom_{R}(-, F)$ to the exact sequence $0\rightarrow M\rightarrow N\rightarrow D_{1} \rightarrow 0$, we get the following exact sequence
$$0=\Ext_{R}^{i}(D_{1}, F)\rightarrow \Ext_{R}^{i}(N, F)\rightarrow \Ext_{R}^{i}(M, F)\rightarrow \Ext_{R}^{i+1}(D_{1}, F)=0$$
for each integer $i\geq 1$. This yields that $\Ext_{R}^{i}(N, F)\cong \Ext_{R}^{i}(M, F)$. By Proposition \ref{prop2}, we have $\Ext_{R}^{i}(M, F)=0$, and so $\Ext_{R}^{i}(N, F)=0$.
 Consider the following pushout diagram:
 \begin{center}
$\xymatrix{
     &&  0\ar[d]_{}  & 0 \ar[d]_{} &  \\
      & 0 \ar[r]^{} & M \ar[d]_{} \ar[r]^{} & D\ar[d]_{} \ar[r]^{} & M \ar@{=}[d]_{} \ar[r]^{} & 0  \\
      &0  \ar[r]^{} & N  \ar[d]\ar[r]^{} &D_{2}\ar[d]_{} \ar[r]^{} & M  \ar[r]^{} & 0  \\
      && D_{1}\ar@{=}[r]^{} \ar[d]& D_{1}\ar[d] & \\
    & & 0& 0  &
      }$

\end{center}
Because both $D$ and $D_{1}$ are strongly Gorenstein flat, $D_{2}$ is also strongly Gorenstein flat by Lemma \ref{lem1}. Then there exists a short exact sequence $0\rightarrow D_{2}\rightarrow P_{0}\rightarrow W\rightarrow 0$ with $P_{0}$ projective and $W$ strongly Gorenstein flat.
Consider the following pushout diagram:
\begin{center}
$\xymatrix{
      & & 0\ar[d]_{}  & 0 \ar[d]_{} &  \\
       0\ar[r]& N\ar@{=}[d]^{} \ar[r]& D_{2}\ar[d]\ar[r] &M\ar[d]\ar[r]&0 \\
 0\ar[r]& N \ar[r]& P_{0}\ar[d]\ar[r] &G\ar[d]\ar[r]&0 \\
&&W\ar[d]_{} \ar@{=}[r]^{} & W \ar[d]_{} \\
    & & 0& 0  &
      }$
\end{center}
Applying the functor $\Hom_{R}(-, F)$ to the exact sequence $0\rightarrow M\rightarrow G\rightarrow W \rightarrow 0$, we get $\Ext_{R}^{i}(G, F)=0$.
On the other hand, applying the functor $\Hom_{R}(-, F)$ to the exact sequence $0\rightarrow N\rightarrow P_{0}\rightarrow G \rightarrow 0$, we get the following exact sequence
$$0\rightarrow \Hom_{R}(G, F)\rightarrow \Hom_{R}(P_{0}, F)\rightarrow \Hom_{R}(N, F)\rightarrow 0.$$
Thus we obtain that an $\Hom_{R}(-, F)$ exact exact sequence $0\rightarrow N\rightarrow P_{0}\rightarrow G \rightarrow 0$ where $P_{0}$ is projective and $G$ is a module with the same property as $N$. Recursively, we get an exact sequence $$0\rightarrow N\rightarrow P_{0}\rightarrow P_{-1}\rightarrow \cdots$$
of projective modules, which remains exact after applying the functor $\Hom_{R}(-, \F(R))$. Thus $N$ is strongly Gorenstein flat. Because both $N$ and $D_{1}$ are strongly Gorenstein flat, $M$ is also strongly Gorenstein flat by Lemma \ref{lem1}.
\end{proof}

\begin{remark} Denote by $\SG^{2}_{G}\F(R)$ the subcategory of all modules
for which there exists an  exact sequence \[\cdots \longrightarrow D_{1}\longrightarrow D_{0}\longrightarrow D_{-1}\longrightarrow D_{-2}\longrightarrow \cdots\] of strongly Gorenstein flat modules such that $M\cong \Ker(D_{0}\rightarrow D_{-1})$ and $\Hom_{R}(-, H)$ leaves the sequence exact for each Gorenstein flat module $H$. It is routine to check that $\SGF(R)\subseteq \SG^{2}_{G}\F(R)\subseteq \SG^{2}\F(R)$. By Theorem \ref{th}, $\SGF(R)=\SG^{2}_{G}\F(R)=\SG^{2}\F(R)$.
\end{remark}

\begin{remark} Denote by GP$(R)$ and G$^{2}$P$(R)$ the subcategories of all Gorenstein projective and two-degree Gorenstein projective modules, respectively.
Denote by $\G^{2}_{G}P(R)$ the subcategory of all modules
for which there exists an  exact sequence \[\cdots \longrightarrow G_{1}\longrightarrow G_{0}\longrightarrow G_{-1}\longrightarrow G_{-2}\longrightarrow \cdots\] of Gorenstein projective modules such that $M\cong \Ker(G_{0}\rightarrow G_{-1})$ and $\Hom_{R}(-, G)$ leaves the sequence exact for each Gorenstein projective module $G$. Denote by $\G^{2}_{P}P(R)$ the subcategory of all modules
for which there exists an  exact sequence \[\cdots \longrightarrow G_{1}\longrightarrow G_{0}\longrightarrow G_{-1}\longrightarrow G_{-2}\longrightarrow \cdots\] of Gorenstein projective modules such that $M\cong \Ker(G_{0}\rightarrow G_{-1})$ and $\Hom_{R}(-, P)$ leaves the sequence exact for each projective module $P$. It is routine to check that $\GP(R)\subseteq \G^{2}P(R) \subseteq \G^{2}_{G}P(R)\subseteq \G^{2}_{P}P(R)$. Similar to the proof of Theorem \ref{th}, we obtain $\G^{2}_{P}P(R)=\GP(R)$. So $\GP(R)=\G^{2}P(R)=\G^{2}_{G}P(R)=\G^{2}_{P}P(R)$.
\end{remark}

According to \cite{Mao2008}, a left $R$-module $M$ is called Gorenstein FP-injective if there is an exact sequence
\[\cdots \longrightarrow E_{1}\longrightarrow E_{0}\longrightarrow E_{-1}\longrightarrow E_{-2}\longrightarrow \cdots\]
of injective left $R$-modules with $M\cong\Ker (E_{0}\longrightarrow E_{-1})$ such that $\Hom_{R}(E, -)$ leaves the sequence exact whenever $E$ an FP-injective $R$-module. We use $\GFI(R)$ to denote the class of all Gorenstein FP-injective modules.

By definitions, every injective module is Gorenstein FP-injective, and every Gorenstein FP-injective module is Gorenstein injective. If $R$ is left noetherian, then every Gorenstein injective module is Gorenstein FP-injective.

Dual arguments to the above give the following assertions concerning the Gorenstein FP-injective modules.

\begin{definition}\label{} A module $M$ is called two-degree Gorenstein FP-injective if there exists an  exact sequence \[\cdots \longrightarrow D_{1}\longrightarrow D_{0}\longrightarrow D_{-1}\longrightarrow D_{-2}\longrightarrow \cdots\] of Gorenstein FP-injective modules such that $M\cong \Ker(D_{0}\rightarrow D_{-1})$ and $\Hom_{R}(H, -)$ leaves the sequence exact for each FP-injective module $H$.
\end{definition}
We use $\GF^{2}\I(R)$ to denote the class of all two-degree Gorenstein FP-injective modules.
\begin{theorem}$\GFI(R)=\GF^{2}\I(R)$.
\end{theorem}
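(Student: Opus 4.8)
The plan is to run the proof of Theorem \ref{th} through the projective--injective duality, replacing every ingredient by its dual. The dictionary I would use is: projective $\leftrightarrow$ injective, flat test modules and the functor $\Hom_{R}(-,\F(R))$ $\leftrightarrow$ FP-injective test modules and the functor $\Hom_{R}(H,-)$, finite flat dimension $\leftrightarrow$ finite FP-injective dimension, direct sums $\leftrightarrow$ direct products, pushout squares $\leftrightarrow$ pullback squares, and kernels of epimorphisms $\leftrightarrow$ cokernels of monomorphisms. Since $\DI(R)\subseteq\D^{2}\I(R)$ is immediate, it suffices to prove the reverse inclusion $\D^{2}\I(R)\subseteq\DI(R)$.

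The first step is to establish the duals of the two preliminary results. Dual to Lemma \ref{lem1}, I would record that $\DI(R)$ is an injectively coresolving class that is closed under direct products and direct summands, and that $\Ext^{i}_{R}(L,M)=0$ whenever $M\in\DI(R)$, $i>0$, and $L$ has finite FP-injective dimension; here the base case $\Ext^{i}_{R}(H,M)=0$ for FP-injective $H$ is exactly the statement that the injective coresolution of $M$ stays $\Hom_{R}(H,-)$-exact. The closure under products is the point to verify with some care: it follows because a product of injectives is injective and because $\Hom_{R}(H,\prod_{i}D_{i})\cong\prod_{i}\Hom_{R}(H,D_{i})$, exactness of products keeping the assembled resolution $\Hom_{R}(H,-)$-exact. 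Dual to Proposition \ref{prop2}, I would then show that $M\in\D^{2}\I(R)$ forces $\Ext^{i}_{R}(L,M)=0$ for every $L$ of finite FP-injective dimension and every $i\geq1$; the base case splits $M$ into $0\to M\to D\to C\to 0$ with $D\in\DI(R)$, $C\in\D^{2}\I(R)$, and $\Hom_{R}(H,-)$-exact, and dimension shifting reiterated on $C$ kills the higher $\Ext$, while the inductive step uses $0\to L\to H^{0}\to L'\to 0$ with $H^{0}$ FP-injective to drop the FP-injective dimension of the argument.

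Next I would introduce the strongly two-degree Ding injective modules $\SD^{2}\I(R)$ via a complex $\cdots\to D\to D\to D\to\cdots$ of Ding injectives with a single repeated map that remains $\Hom_{R}(H,-)$-exact for every FP-injective $H$, dualize the equivalent characterizations of Proposition \ref{prop3}, and prove the analog of Proposition \ref{prop4}: every two-degree Ding injective module is a direct summand of a strongly two-degree Ding injective module. For the last point I would replace the direct sum of the defining complex by the direct product $\cdots\to\prod_{i}D_{i}\to\prod_{i}D_{i}\to\cdots$, note that $\Ker(\prod\delta_{i})\cong\prod_{i}\Ker(\delta_{i})$ retains $M$ as a summand, and invoke the product closure from the previous step to conclude that this kernel is strongly two-degree Ding injective. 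Because $\DI(R)$ is closed under summands, the theorem is thereby reduced to showing that every strongly two-degree Ding injective module is Ding injective.

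The core is the dual of the three pushout diagrams of Theorem \ref{th}, carried out with pullbacks. Beginning from $0\to M\to D\to M\to 0$ with $D\in\DI(R)$ (Proposition \ref{prop3} dualized) and a presentation $0\to D_{1}\to I\to D\to 0$ with $I$ injective and $D_{1}\in\DI(R)$ coming from the left half of the complete injective resolution of $D$, the pullback of $D\twoheadrightarrow M$ along $I\twoheadrightarrow D$ produces $N$ fitting into $0\to D_{1}\to N\to M\to 0$ and $0\to N\to I\to M\to 0$; the long exact $\Ext^{\ast}_{R}(H,-)$ sequence together with the dual of Proposition \ref{prop2} then gives $\Ext^{i}_{R}(H,N)=0$ for all FP-injective $H$ and all $i\geq1$. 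A second pullback produces a Ding injective extension $D_{2}$, and a third, through an injective presentation $0\to W\to I_{0}\to D_{2}\to 0$ with $W\in\DI(R)$, yields an $\Hom_{R}(H,-)$-exact sequence $0\to G\to I_{0}\to N\to 0$ in which $G$ shares the $\Ext$-vanishing of $N$. Iterating assembles an $\Hom_{R}(H,-)$-exact sequence $\cdots\to I_{1}\to I_{0}\to N\to 0$ of injectives, which splices with the injective coresolution of $N$ (automatically $\Hom_{R}(H,-)$-exact since every $\Ext^{i}_{R}(H,N)$ vanishes) into a complete injective resolution; hence $N\in\DI(R)$. Finally, $0\to D_{1}\to N\to M\to 0$ exhibits $M$ as the cokernel of a monomorphism between Ding injective modules, so $M\in\DI(R)$ by injectively coresolving closure. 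I expect no conceptual obstacle beyond this dictionary; the one place demanding genuine checking is the dual of Lemma \ref{lem1}, especially the product closure of $\DI(R)$ and the identification of finite FP-injective dimension as the dimension governing $\Hom_{R}(H,-)$, after which every diagram and every $\Ext$ computation transposes mechanically from the projective case.
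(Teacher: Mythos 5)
Your proposal is correct and is essentially the paper's own proof: the paper establishes this theorem by simply declaring that ``dual arguments to the above'' apply, and your dictionary (direct products for direct sums, pullbacks for pushouts, FP-injective test modules and $\Hom_{R}(H,-)$, finite FP-injective dimension) carries out exactly that dualization, including the reduction via strongly two-degree Ding injective modules and the three pullback diagrams mirroring Theorem \ref{th}. No gaps.
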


\vspace{0.3cm} \hspace{-0.8cm}{\bf{Acknowledgement}}

 The authors would like to thank the referee for valuable
suggestions and helpful corrections.


\begin{thebibliography}{ABCDF}

\bibitem{AB1969}Auslander, M., Bridger, M.: \emph{Stable module theory}. in Memoirs of the American mathematical society, vol. $\mathbf{94}$ (American Mathematical Society, Providence, RI) (1969)
\bibitem{BM2007}Bennis, D., Mahdou, N.: Strongly Gorenstein projective, injective, and flat modules. \emph{J. Pure Appl. Algebra}, $\mathbf{210}$, 437--445 (2007)
 \bibitem{BK2012} Bouchiba, S., Khaloui, M.: Stability of Gorenstein flat modules. \emph{Glasgow Math. J.}, $\mathbf{54}$, 169--175 (2012)
 \bibitem{Ding2009}Ding, N.Q.,  Li, Y.L., Mao, L.X.: Strongly Gorenstein flat modules. \emph{J. Aust. Math. Soc.}, $\mathbf{86}$, 323--338 (2009)
\bibitem{EJ1995} Enochs, E.E., Jenda, O.M.G.: Gorenstein injective and projective modules. \emph{Math.Z.}, $\mathbf{220}$, 611--633 (1995)
\bibitem{EJT1993}Enochs, E.E., Jenda, O.M.G., Torrecillas, B.: Gorenstein flat modules. \emph{Nanjing Daxue Xuebao Shuxue Bannian Kan}, $\mathbf{10}$, 1--9 (1993)
\bibitem{Enochs2000} Enochs, E.E., Jenda, O.M.G.: \emph{Relative Homological Algebra}. de
Gruyter Exp. Math., vol. $\mathbf{30}$, de Gruyter, Berlin (2000)
\bibitem{Gillespie2010}Gillespie, J.: Model structures on modules over Ding-Chen rings. \emph{Homology, Homotopy and Applications}, $\mathbf{12}$, 61--73 (2010)
\bibitem{Holm2004} Holm, H.: Gorenstein
homological dimensions. \emph{J. Pure Appl. Algebra}, $\mathbf{189}$, 167--193 (2004)
\bibitem{Mao2008} Mao, L.X., Ding, N.Q.: Gorenstein
FP-injective and Gorenstein flat modules. \emph{J. Algebra Appl.}, $\mathbf{7}$, 491--506 (2008)
\bibitem{SSW2008} Sather-Wagstaff, S., Sharif, T., White, D.: Stability of Gorenstein categories. \emph{J. Lond. Math. Soc.}, $\mathbf{77}$, 481--502 (2008)
\bibitem{SSW2011} Sather-Wagstaff, S., Sharif, T., White, D.: AB-contexts and stability for Gorenstein flat modules with respect to semi-dualizing modules. \emph{Algebr. Represent. Theory}, $\mathbf{14}$, 403--428 (2011)
\bibitem{Xu2013} Xu, A.M.: Notes on stability of Gorenstein categories. \emph{J. Algebra Appl.}, $\mathbf{12(1)}$, 1250209 (12 pages) (2013)
\bibitem{YL2012}Yang, G., Liu, Z.K.: Stability of Gorenstein flat categories. \emph{Glasgow Math. J.}, $\mathbf{54}$, 177--191 (2012)
\end{thebibliography}
\end{document}